\newtheorem{theorem}{Theorem}
\newtheorem{example}{Example}
\numberwithin{equation}{section}
\begin{document}
	
	\title[On the construction of complete expanding gradient Ricci solitons]{On the construction of complete expanding gradient Ricci solitons}
	
	\author{M\'arcio de Sousa}
	
	\author{Paula Bonfim}
	
	\author{Romildo Pina}
	
	\author{Tib\'erio Martins}
	
	\address{ICET - CUA, Universidade Federal de Mato Grosso - Av. Universit\'aria nº 3.500, Pontal do Araguaia, MT, Brazil.}
	
	\address{IME, Universidade Federal de Goi\'as - Caixa Postal 131, 74001-970, Goi\^ania, GO, Brazil.}

	\email{marciolemesew@yahoo.com.br}
	\email{paulacorreiacatu@hotmail.com}
	\email{romildo@ufg.br}
	\email{tiberio.b@gmail.com}
	
	\keywords{Euclidean space, warped product, gradient Ricci soliton.} \subjclass[2010]{53C20, 53C21, 53C25.}
	\date{\today}

\begin{abstract}
\noindent
We study gradient Ricci solitons warped products whose base is the Euclidean space. We show that the warping functions of these manifolds are invariant under the $(n-1)$-dimensional translation group. We characterize the potential function when the torsion function depends only on one variable, which is a particular case of invariance by translation. From this study, we derive complete examples of expanding gradient Ricci solitons.
\end{abstract}

\maketitle
\section{Introduction}

Gradient Ricci solitons are natural generalizations of Einstein manifolds, and their importance  attracts the attention of many researchers. These manifolds are self-similar solutions of the Ricci flow $\partial g(t)/\partial t = -2Ric_{g(t)}$ and appear as limits of expansion of singularities in this flow.

Let $(M,g)$ be a Riemannian manifold of dimension $n\geq 3$. We say that $(M,g)$ is a gradient Ricci soliton if there exists a differentiable function $h : M \rightarrow \mathbb{R}$ such that 

\begin{equation} \label{eqGRS}
Ric_g + Hess_g(h) = \rho g.
\end{equation}
where $Ric_g$ is the Ricci tensor, $Hess_g(h)$ is the Hessian of $h$ with respect to the metric $g$, and $\rho\in\mathbb{R}$. A gradient Ricci soliton is said to be shrinking, steady, or expanding if $\rho>0$, $\rho=0$, or $\rho<0$, respectively. When $h$ is a constant function, we have an Einstein manifold.

Examples and classifications of gradient Ricci solitons are found in the literature over the approximately thirty years since the introduction of the Ricci solitons concept in \cite{Hamilton}. Most of the examples found are classified as shrinking or steady.

Considering $\mathbb{R}^n$ with the canonical metric $g_0$, $(\mathbb{R}^n,g_0)$ is a gradient Ricci soliton with a potential function given by $h(x)=A|x|^2/2+g_0(x,B)+C$, with $A,C\in\mathbb{R}$ and $B\in\mathbb{R}^n$. They are called Gaussian solitons. By varying the value of the constant $A$, these can be shrinking, steady, or expanding.

A characterization for conformally flat complete shrinking gradient Ricci solitons was given in \cite{Garcia-Rio}, which shows that such manifolds are essentially quotients of $\mathbb{R}^n$, $\mathbb{S}^n$, or $\mathbb{R}\times\mathbb{S}^{n-1}$.

Robert Bryant proved in \cite{Bryant} that there is a single complete, steady, gradient Ricci soliton that is spherically symmetric for any $n\geq 3$, which is known as the Bryant soliton.
In 2012, Cao and Chen showed in \cite{Cao} that any complete, locally conformally flat steady gradient Ricci soliton is either flat or isometric to the Bryant soliton.

In \cite{Catino1} and \cite{YangZhang}, the authors proved several classification theorems for gradient expanding and steady Ricci solitons. In \cite{Catino1}, they proved that the only complete expanding solitons with nonnegative sectional curvature and integrable scalar curvature are quotients of the Gaussian soliton. In \cite{YangZhang}, they proved that a complete noncompact radially Ricci flat gradient expanding Ricci soliton with nonnegative Ricci curvature is a finite quotient of $R^n$. Moreover, they proved that a complete noncompact gradient expanding Ricci soliton with  $Ric \geq 0 $ and $ Div^ 4 Rm = 0 $  is a finite quotient of $R^n$.

It is important to remember that a Riemannian warped product is a product $B \times F$ equipped with the metric $g_B\oplus f^2g_F$, where $g_B$ and $g_F$ denote the Riemannian metrics of $B$ and $F$, respectively, and $f : B \rightarrow \mathbb{R}$ is a positive function.

In \cite{MarcioRomildo}, the authors studied gradient Ricci solitons with warped product structure, and obtained all invariant solutions by translation of \eqref{eqGRS} when the base is locally conformally flat $\displaystyle \left(R^n,\bar{g}=\frac{1}{\varphi^2}g_0\right)$ and the fiber $F^m$ is an Einstein manifold. The solutions that were found excluded the case where $\varphi$ is constant, that is, when the base is the pseudo-Euclidean space. Inspired by the absence of this type of solution in \cite{MarcioRomildo}, in this work we study Riemannian warped products of the form $(\mathbb{R}^n\times_fF^m)$ satisfying equation \eqref{eqGRS}, where $f$ and $h$ are any differentiable functions. Here we do not use the hypothesis that the functions are invariant by translation.

The authors in \cite{Nazareno} showed that expanding or steady gradient Ricci solitons warped products whose torsion function reaches maximum and minimum must be Riemannian products. In the same work, they presented a complete example whose base is the Euclidean space, and that is generalized in this study.

In Theorem \ref{teoremafinvariante} we show that in a Riemannian warped product $\mathbb{R}^n\times_fF^m$, $n\geq 2$, with metric $g_0\oplus f^2g_F$ and $g_0$ denoting the Euclidean metric, the warping function $f$ is invariant by translation; that is,
$$f(x_1,\ldots,x_n)=P\left(\sum_{i=1}^{n}a_ix_i+b_i\right),$$
where $a_i,b_i\in\mathbb{R}$, and $P$ is a function of at least $C^1$. Already in Theorem \ref{teoremasistemadeEDO} we characterize the potential function of a gradient Ricci soliton when the torsion function depends only on one variable, which is a particular case of invariance by translation. From these two results we built several examples of gradient Ricci solitons. Among them, the examples \ref{exemplodoMárcio} and \ref{exemplodoNazareno} $i)$ are complete expanding gradient Ricci solitons.

\section{Statements}

In the following results, we consider
\begin{equation}\label{produtotorcido}
(M,g)=(\mathbb{R}^n\times_fF^m,g_0\oplus f^2g_F),
\end{equation}
a Riemannian warped product, where the base $(\mathbb{R}^n,g_0)$ is a Euclidean space with $n\geq 2$ and coordinates $(x_1,\ldots,x_n)$, the fiber $(F^m,g_F)$ is a Riemannian manifold with $m\geq 1$ and the warping function $f : \mathbb{R}^n \rightarrow R^*_+$ is a smooth function. We denote by $f,_{x_ix_j}$ and $h,_{x_ix_j}$ the second order derivatives of $f$ and $h$, respectively, with respect to $x_i$ and $x_j$. In addition, we identify fields and functions in $\mathbb{R}^n$ and $F^m$ with your lifting the $\mathbb{R}^n\times_fF^m$.

The first theorem classifies the warping function of a gradient Ricci soliton of the form \eqref{produtotorcido}. It tells us that this function is invariant under the $(n-1)$-dimensional translation group, which is a subgroup of the group of isometries from the Euclidean space $\mathbb{R}^n$.

\begin{theorem}\label{teoremafinvariante}
Let $(\mathbb{R}^n,g_0)$ be the Euclidean space, $n \geq 2$, with coordinates $x=(x_1,\cdots, x_n)$ and metric components $g_{0_{ij}}=\delta_{ij}$. Consider a warped product $M = \mathbb{R}^{n}\times _{f}F^{m}$ with metric $g = g_0 + f^{2}g_{F}$, where $F$ is a Riemannian manifold of dimension $m\geq 1$, $f, h:\mathbb{R}^{n}\rightarrow \mathbb{R}$ smooth functions and $f$ positive. If $(M,g)$ is a gradient Ricci soliton with potential function $h$, then the warping function $f$ is invariant by translation.
\end{theorem}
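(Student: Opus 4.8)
The plan is to convert \eqref{eqGRS} into an overdetermined first-order system for the warping function $f$ and then exploit the automatic symmetry of the third derivatives of $h$. First I would record the soliton equations in the base directions. Using the O'Neill formulas for the Ricci tensor of a warped product with flat base, and recalling that $h$ is a function of $(x_1,\dots,x_n)$ only, both $Ric_g$ and $Hess_g(h)$ are block diagonal with respect to the splitting into base and fiber directions; in the base directions one has $Ric_g(\partial_{x_i},\partial_{x_j})=-\frac{m}{f}f,_{x_ix_j}$ and $Hess_g(h)(\partial_{x_i},\partial_{x_j})=h,_{x_ix_j}$, so \eqref{eqGRS} reads
\[
h,_{x_ix_j}-\frac{m}{f}\,f,_{x_ix_j}=\rho\,\delta_{ij}\qquad(1\le i,j\le n).
\]
The fiber-direction part of \eqref{eqGRS} forces $F$ to be Einstein and yields one further scalar relation among $f$, $h$ and $|\nabla f|^2$, which I would not need here.

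Next I would use that, $h$ being smooth, $h,_{x_ix_jx_k}$ is symmetric in $i,j,k$. Differentiating the displayed system with respect to $x_k$, using $\partial_{x_k}\delta_{ij}=0$ and the symmetry of $f,_{x_ix_jx_k}$, the third-derivative terms cancel and there remains the pointwise identity $f,_{x_ix_j}\,f,_{x_k}=f,_{x_ix_k}\,f,_{x_j}$ for all $i,j,k$. For fixed $i$ this says that the $i$-th row of the Euclidean Hessian $(f,_{x_ix_j})$ is proportional to the gradient $\nabla f=(f,_{x_1},\dots,f,_{x_n})$; combined with the symmetry of the Hessian it forces
\[
f,_{x_ix_j}=\mu\, f,_{x_i}\,f,_{x_j}
\]
on the open set $U=\{x:\nabla f(x)\neq0\}$, for some smooth function $\mu$. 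Differentiating $f,_{x_i}$ and using this, I get $\partial_{x_k}(\nabla f)=\mu\, f,_{x_k}\,\nabla f$ on $U$, so the derivative of $\nabla f$ in every direction is a multiple of $\nabla f$; hence the unit field $\nabla f/|\nabla f|$ is locally constant on $U$, and on a connected component of $U$ there is a fixed unit vector $a=(a_1,\dots,a_n)$ with $\nabla f\parallel a$. Consequently $f$ is constant on the hyperplanes orthogonal to $a$ and has the asserted form $f(x)=P\!\left(\sum_{i=1}^{n}a_ix_i+b_i\right)$ there; when $U=\varnothing$, $f$ is constant and the conclusion is trivial.

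The step I expect to be the main obstacle is the globalization over all of $\mathbb{R}^n$: the zero set $\{\nabla f=0\}$ need not be empty — for $f=P(\sum_{i=1}^{n}a_ix_i+b_i)$ it is exactly the union of the parallel hyperplanes where $P'$ vanishes — and it can disconnect $U$, so I must rule out that $f$ ``switches direction $a$'' across such a hyperplane. I would close this gap by a continuity argument, noting that $Hess_{g_0}f$ is a continuous field of symmetric matrices of rank at most one whose image line varies continuously and hence stays in a single fixed direction; alternatively one can invoke the real-analyticity of gradient Ricci soliton data, after which the identity $f=P(\sum_{i=1}^{n}a_ix_i+b_i)$ on a nonempty open set propagates to all of $\mathbb{R}^n$.
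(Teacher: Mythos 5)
Your argument is correct and essentially the paper's own: eliminating $h$ via equality of mixed third derivatives yields exactly the paper's key identity \eqref{eq5} (proportionality of the rows of $Hess_{g_0}f$ to $\nabla f$), and integrating that first-order relation to get $f=P\left(\sum_{i}a_ix_i+b_i\right)$ is the same step the paper performs via $f,_{x_i}=c_{ij}f,_{x_j}$, only phrased geometrically through the constancy of $\nabla f/|\nabla f|$ instead of through logarithms. The degeneracy issue you flag on $\{\nabla f=0\}$ is genuine, but the published proof handles it no better --- it simply declares ``without loss of generality, suppose that $f,_{x_i}\neq 0$ for all $i$'' --- so your explicit acknowledgment of the gluing problem, with a sketched continuity/analyticity fix, is if anything more careful than the paper's treatment.
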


In the next result, we obtain the system of partial differential equations that describes warped products gradient Ricci solitons of the form $\mathbb{R}^n\times_fF^m$ when the potential function $h$ depends only on the base and the fiber $F^m$ is an Einstein manifold.

\begin{theorem}\label{teoremaEDP}
Let $(\mathbb{R}^n,g_0)$ be the Euclidean space, $n \geq 2$, with coordinates $x=(x_1,\cdots, x_n)$ and metric components $g_{0_{ij}}=\delta_{ij}$. Consider a warped product $M = \mathbb{R}^{n}\times _{f}F^{m}$ with metric $g = g_0 + f^{2}g_{F}$, where  $F$ is a Riemannian Einstein manifold with constant Ricci curvature $\lambda_{F}$, $m\geq 1$, $f, h:\mathbb{R}^{n}\rightarrow \mathbb{R}$ smooth functions and $f$ positive. Then $M$ is a gradient Ricci soliton with potential function $h$ if and only if the functions $f$ and $h$ satisfy:

\begin{equation}\label{sistemadeEDP}
\begin{cases}
fh,_{x_ix_j}-mf,_{x_ix_j}=0 , \ \forall i\neq j\\
fh,_{x_ix_i}-mf,_{x_ix_i}=\rho f, \ \forall i\\
\displaystyle \sum_{k=1}^{n}\left[-ff,_{x_kx_k}-(m-1)f,_{x_k}^2 + ff,_{x_k}h,_{x_k}\right]=\rho f^2-\lambda_F
\end{cases}.
\end{equation}
\end{theorem}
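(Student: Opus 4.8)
The plan is to expand the soliton equation \eqref{eqGRS} directly, using the classical O'Neill formulas for the Levi-Civita connection and the Ricci tensor of a warped product, specialized to the flat base $(\mathbb{R}^n,g_0)$ and the Einstein fiber $(F^m,g_F)$. Throughout, let $X,Y$ denote lifts of vector fields on $\mathbb{R}^n$ and $V,W$ lifts of vector fields on $F^m$, so that $g(X,V)=0$ and $g(V,W)=f^2g_F(V,W)$. The equation \eqref{eqGRS} is a symmetric $2$-tensor identity, and since both $Ric_g$ and $Hess_g(h)$ will turn out to be block-diagonal with respect to the horizontal/vertical splitting, it suffices to evaluate it on the pairs $(X,Y)$, $(X,V)$ and $(V,W)$ separately.

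First I would record the Ricci curvature of $(M,g)$. For a warped product $B\times_fF$ one has $Ric_g(X,Y)=Ric_{g_B}(X,Y)-\tfrac{m}{f}Hess_{g_B}(f)(X,Y)$, $Ric_g(X,V)=0$, and $Ric_g(V,W)=Ric_{g_F}(V,W)-\bigl(\tfrac{f\,\Delta_{g_B}f+(m-1)|\nabla_{g_B}f|^2}{f^2}\bigr)g(V,W)$. Here $B=\mathbb{R}^n$ is flat, so $Ric_{g_B}=0$, $Hess_{g_B}(f)$ has entries $f,_{x_ix_j}$, $\Delta_{g_B}f=\sum_k f,_{x_kx_k}$, and $|\nabla_{g_B}f|^2=\sum_k f,_{x_k}^2$; and since $F$ is Einstein, $Ric_{g_F}=\lambda_F g_F$.

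Next I would compute $Hess_g(h)$ for the function $h$ lifted from the base. Because $\nabla h$ is horizontal and the warped-product connection satisfies $\nabla_VX=\tfrac{X(f)}{f}V$ for horizontal $X$, one gets $Hess_g(h)(X,Y)=Hess_{g_B}(h)(X,Y)$ with entries $h,_{x_ix_j}$, $Hess_g(h)(X,V)=0$, and $Hess_g(h)(V,W)=\tfrac{\langle\nabla_{g_B}h,\nabla_{g_B}f\rangle}{f}\,g(V,W)$. Substituting into $Ric_g+Hess_g(h)=\rho g$: the $(\partial_{x_i},\partial_{x_j})$ component, after multiplying through by $f$, gives $f h,_{x_ix_j}-m f,_{x_ix_j}=\rho f\,\delta_{ij}$, which is the first equation of \eqref{sistemadeEDP} when $i\neq j$ and the second when $i=j$; the mixed component reduces to $0=0$; and the $(V,W)$ component, after dividing by $g_F(V,W)$ and using $g(V,W)=f^2g_F(V,W)$, gives $\lambda_F-f\,\Delta_{g_B}f-(m-1)|\nabla_{g_B}f|^2+f\langle\nabla_{g_B}h,\nabla_{g_B}f\rangle=\rho f^2$, which rearranges to the third equation of \eqref{sistemadeEDP}. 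Conversely, if $f$ and $h$ satisfy \eqref{sistemadeEDP}, then each of these components of $Ric_g+Hess_g(h)-\rho g$ vanishes, and by block-diagonality the full tensor vanishes, so $(M,g)$ is a gradient Ricci soliton with potential $h$.

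I do not expect a genuine obstacle here; the argument is essentially bookkeeping with the O'Neill identities. The one point requiring care is the vertical-vertical component of $Hess_g(h)$: a function pulled back from the base is not constant along the fibers of $M$, and one must use the warped-product connection to obtain its fiber-direction Hessian with the correct coefficient and sign. After that, the only thing to verify is that there are no cross terms, i.e. that $Ric_g$ and $Hess_g(h)$ both respect the horizontal/vertical decomposition, which is immediate from the formulas above; this is what makes \eqref{eqGRS} decouple exactly into the three scalar equations of \eqref{sistemadeEDP}.
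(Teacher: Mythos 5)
Your proposal is correct and follows essentially the same route as the paper: both use the O'Neill warped-product formulas for $Ric_g$ together with the block-diagonal Hessian of the base function $h$ (in particular $Hess_g(h)(V,W)=f\langle\nabla_{g_0}h,\nabla_{g_0}f\rangle\,g_F(V,W)$), and then read off the three scalar equations from the horizontal, mixed, and vertical components of \eqref{eqGRS}. If anything, your write-up is slightly more complete, since you spell out the converse direction explicitly, which the paper leaves implicit because the component computation is an equivalence.
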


The following is an example of a complete solution for the system \eqref{sistemadeEDP}, considering the information given by Theorem \ref{teoremafinvariante}, i.e., taking $f$ to be invariant by translation.

\begin{example}\label{exemplodoMárcio}
Consider $\displaystyle \xi=\sum_{k=1}^{n}x_k$ and the functions
$$f(x_1,\ldots,x_n)= a_{1}e^{\sqrt{\frac{a}{m}}\xi} + a_{2}e^{-\sqrt{\frac{a}{m}}\xi},$$
and
$$h(x_1,\ldots,x_n) = \frac{-(n-1)a}{2}\sum_{k=1}^{n}x_{k}^{2} + a\sum_{k<l}x_{k}x_{l} + \sum_{k=1}^{n}c_{k}x_{k} + b,$$
where $a,a_1,a_2,b,c_k\in\mathbb{R}$, with $a>0$, $a_1^2+a_2^2\neq 0$, and $\displaystyle \sum_{k=1}^{n}c_{k} = 0$. So, we have
$$\begin{array}{lll}
f_{x_i}=a_{1}\sqrt{\frac{a}{m}}e^{\sqrt{\frac{a}{m}}\xi} - a_{2}\sqrt{\frac{a}{m}}e^{-\sqrt{\frac{a}{m}}\xi}, & & h,_{x_i}=-(n-1)ax_i+a\sum_{k\neq i}^{x_k}+c_i,\\
f,_{x_ix_i}=a_{1}\frac{a}{m}e^{\sqrt{\frac{a}{m}}\xi} + a_{2}\frac{a}{m}e^{-\sqrt{\frac{a}{m}}\xi}=\frac{a}{m}f, & & h,_{x_ix_i}=-(n-1)a,\\
f,_{x_ix_j}=\frac{a}{m}f, & & h,_{x_ix_j}=a.
\end{array}$$
With this information, it follows that the first equation of \eqref{sistemadeEDP} is automatically satisfied. From the second equation we get that $\rho=-na$. Now, replacing the data in the first member of the third equation of \eqref{sistemadeEDP}, we have
$$\begin{array}{l}
\displaystyle \sum_{k=1}^{n}\left[-\frac{a}{m}f^2-(m-1)\frac{a}{m}\left(a_1^2e^{2\sqrt{\frac{a}{m}}\xi}-2a_1a_2+a_2^2e^{-2\sqrt{\frac{a}{m}}\xi}\right)\right.\\
\left.+ff'\left(-(n-1)ax_k+a\sum_{l\neq k}x_l+c_k\right)\right]\\
\displaystyle =-\frac{na}{m}f^2-\frac{n(m-1)a}{m}(f^2-4a_1a_2)+ff'[-(n-1)a\xi+a\xi(n-1)]\\
\displaystyle =\rho f^2+4na\frac{(m-1)}{m}a_1a_2.
\end{array}$$
If $\displaystyle \lambda_F=-4a_1a_2na\frac{(m-1)}{m}=4a_1a_2\rho\frac{(m-1)}{m}$, we conclude that this expression is equal to $\rho f^2-\lambda_F$, and then the last equation of \eqref{sistemadeEDP} is satisfied.

As here the warping function is globally defined, if the fiber $F$ is a complete Einstein manifold, we have a complete expanding gradient Ricci soliton $\mathbb{R}^n\times_fF^m$ with $\rho=-na$, where the fiber has Ricci curvature $\displaystyle \lambda_F=4a_1a_2\rho\frac{(m-1)}{m}$.
\end{example}

Note that in the example above the Ricci curvature of the fiber can be negative, positive, or null, depending on whether $a_1$ and $a_2$ have the same sign, opposite signs, or if either of these constants is zero. Therefore, the fiber $F$ can be, for example, any $m$-dimensional model space $M^m(c)$, with sectional curvature $c\in\{-1,0,1\}$. In these cases, we have complete expanding gradient Ricci solitons $\mathbb{R}^n\times_fM^m(c)$. We also note that for \cite{Garcia-Rio} these solitons are not locally conformally flat.

Moreover, in the case that the Ricci curvature of $F$ is null with $a_1=0$ or $a_2=0$, we can choose for $F$, for example, the famous Schwarzschild metric, and get a beautiful example of an expanding gradient Ricci soliton.

In Theorem \ref{teoremaEDP}, we consider the case where the warping function depends on only one of the variables, which is a particular case of invariance by translation. Without loss of generality, we assume that $f$ depends only on $x_1$.

\begin{theorem}\label{teoremasistemadeEDO}
Let $(\mathbb{R}^n,g_0)$ be the Euclidean space, $n \geq 2$, with coordinates $x=(x_1,\cdots, x_n)$ and metric components $g_{0_{ij}}=\delta_{ij}$. Consider a warped product $M = \mathbb{R}^{n}\times _{f}F^{m}$ with metric $g = g_0 + f^{2}g_{F}$, where  $F$ is a Riemannian Einstein manifold with constant Ricci curvature $\lambda_{F}$, $m\geq 1$, $f, h:\mathbb{R}^{n}\rightarrow \mathbb{R}$ smooth functions and $f$ positive. If $f=f(x_1)$, then
$$h(x_1,\ldots,x_n)= h_1(x_1) + \sum_{k=1}^n\left(\frac{\rho}{2}x_k^2+a_kx_k + b_k\right),$$
and $h_1(x_1)$ is a smooth function satisfying
\begin{equation}
\begin{cases}\label{sistemadeEDO}
\lambda_F-ff''-(m-1)(f')^2+ff'h_1'=\rho f^2\\
fh_1''-mf''=\rho f
\end{cases},
\end{equation}
where $a_k,b_k\in\mathbb{R}$.
\end{theorem}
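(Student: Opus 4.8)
The plan is to obtain this statement as a direct specialization of Theorem~\ref{teoremaEDP}. Since $F$ is Einstein and $h$ is a function on the base $\mathbb{R}^n$, that theorem applies and says that $(M,g)$ is a gradient Ricci soliton with potential $h$ if and only if the pair $(f,h)$ satisfies the PDE system \eqref{sistemadeEDP}. So the entire argument reduces to inserting the hypothesis $f=f(x_1)$ into \eqref{sistemadeEDP} and reading off the structure of $h$; a genuine obstacle is not expected, and the only points deserving care are the integration step and the bookkeeping of integration constants.

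First I would record the surviving derivatives of $f$: when $f=f(x_1)$ we have $f,_{x_1}=f'$, $f,_{x_1x_1}=f''$, and every other first and second partial derivative of $f$ vanishes. Feeding this into the off-diagonal block of \eqref{sistemadeEDP}, namely $fh,_{x_ix_j}-mf,_{x_ix_j}=0$ for $i\neq j$, the term $f,_{x_ix_j}$ is always zero, so $fh,_{x_ix_j}=0$, and since $f>0$ this gives $h,_{x_ix_j}=0$ for all $i\neq j$. Inserting it into the diagonal block $fh,_{x_ix_i}-mf,_{x_ix_i}=\rho f$ for an index $i\geq 2$ kills the $f,_{x_ix_i}$ term and yields $h,_{x_ix_i}=\rho$.

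Next I would integrate. Since all mixed second partials of $h$ vanish, each $h,_{x_i}$ depends on $x_i$ alone; letting $\Psi_i$ be a primitive of $h,_{x_i}$ in the variable $x_i$, the function $h-\sum_{i}\Psi_i(x_i)$ has vanishing gradient on the connected space $\mathbb{R}^n$, hence is constant, so $h=\sum_{i=1}^n\Psi_i(x_i)+\mathrm{const}$. For $i\geq 2$ the relation $\Psi_i''=h,_{x_ix_i}=\rho$ forces $\Psi_i(x_i)=\tfrac{\rho}{2}x_i^2+a_ix_i+b_i$ with $a_i,b_i\in\mathbb{R}$; absorbing the residual $x_1$-dependence together with the global constant (and, if one wishes, the quadratic summand in $x_1$) into a single smooth function $h_1(x_1)$ produces exactly the claimed form of $h$.

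Finally I would substitute this form of $h$ back into the two equations of \eqref{sistemadeEDP} that still carry content. The diagonal equation for $i=1$, i.e. $fh,_{x_1x_1}-mf''=\rho f$, becomes $fh_1''-mf''=\rho f$, the second equation of \eqref{sistemadeEDO}. The trace equation $\sum_k\bigl[-ff,_{x_kx_k}-(m-1)f,_{x_k}^2+ff,_{x_k}h,_{x_k}\bigr]=\rho f^2-\lambda_F$ collapses, since only the $k=1$ term is nonzero, to $-ff''-(m-1)(f')^2+ff'h_1'=\rho f^2-\lambda_F$, which rearranges to the first equation of \eqref{sistemadeEDO}. Reversing each of these steps gives the converse implication as well, which is the form actually used to construct the expanding solitons in the examples. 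The main thing to be careful about is that the passage from ``all mixed partials of $h$ vanish'' to the separated-sum form genuinely uses connectedness (here trivial) and a correct tracking of the arbitrary constants $a_k,b_k$.
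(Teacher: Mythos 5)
Your proposal is correct and follows essentially the same route as the paper: specialize the system \eqref{sistemadeEDP} from Theorem~\ref{teoremaEDP} to $f=f(x_1)$, deduce $h,_{x_ix_j}=0$ and $h,_{x_ix_i}=\rho$ for $i\neq 1$ to separate $h$, and read off the two ODEs from the $i=1$ diagonal equation and the trace equation. Your only addition is a more careful justification of the separation/integration step, which the paper treats as evident.
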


Next, we will outline some solutions for the system of equations \eqref{sistemadeEDO}.

\begin{example} \label{exemplodoNazareno}
Suppose that $h_1''(x_1)=h_1'(x_1)=0$ in \eqref{sistemadeEDO}. Thus, the potential function is given by
$$h(x_1,\ldots,x_n)=c + \sum_{k=2}^{n}\left(\frac{\rho}{2}x_k^2+a_kx_k+b_k\right),$$
with $a_k,b_k,c\in\mathbb{R}$.

From the second equation of \eqref{sistemadeEDO} we have
$$f''+\frac{\rho}{m}f=0,$$
and then $f$ is given by

$$f(x_1)=\begin{cases}
\begin{array}{ll}
c_1e^{\sqrt{-\frac{\rho}{m}}x_1}+c_2e^{-\sqrt{-\frac{\rho}{m}}x_1}, & \rho<0\\
c_1+c_2x_1, & \rho=0\\
c_1\sin\left(\sqrt{\frac{\rho}{m}}x_1\right)+c_2\cos\left(\sqrt{\frac{\rho}{m}}x_1\right), & \rho>0
\end{array}
\end{cases},$$

with $c_1,c_2\in\mathbb{R}$.

\begin{itemize}
\item[i)] If $f(x_1)=c_1e^{\sqrt{-\frac{\rho}{m}}x_1}+c_2e^{-\sqrt{-\frac{\rho}{m}}x_1}$, we have $(\mathbb{R}^n\times_fF^m,g_0\oplus f^2g_F)$ is an expanding gradient Ricci soliton, where the fiber $F$ has Ricci curvature $$\displaystyle \lambda_F=\frac{m-1}{m}(c_2^2-c_1^2)\rho.$$
If the fiber is a complete manifold, the gradient Ricci soliton is complete.

\item[ii)] If $f(x_1)=c_1+c_2x_1$, we get a steady gradient Ricci soliton, where the fiber $F$ has positive Ricci curvature $$\displaystyle \lambda_F=(m-1)c_2^2.$$

\item[iii)] If $f(x_1)=c_1\sin\left(\sqrt{\frac{\rho}{m}}x_1\right)+c_2\cos\left(\sqrt{\frac{\rho}{m}}x_1\right)$, we get a shrinking gradient Ricci soliton, where the fiber $F$ has positive Ricci curvature $$\displaystyle \lambda_F=\frac{m-1}{m}(c_1^2+c_2^2)\rho.$$
\end{itemize}
\end{example}

The previous example is a generalization of a complete example given in \cite{Nazareno}, where the authors obtained the function in $i)$ for $m>1$ and $\lambda_F\leq 0$, with the same potential function $h$.

In this example we also note that in the item $i)$ the Ricci curvature of the fiber can be negative (if $|c_2|>|c_1|$), null (if $c_1=c_2$) or positive (if $|c_1|>|c_2|$). Therefore, the fiber $F$ can be any $m$-dimensional model space  $M^m(c)$, with sectional curvature $c\in\{-1,0,1\}$. Then, we have a complete expanding gradient Ricci soliton $\mathbb{R}^n\times_fM^m(c)$. Here we also have that these solitons are not locally conformally flat, due to \cite{Garcia-Rio}.

\begin{example}
Consider the case where $m=1$. Therefore, $\lambda_F=0$, and the system \eqref{sistemadeEDO} becomes
$$\begin{cases}
fh_1''-f''=\rho f\\
-ff''+ff'h_1'=\rho f^2
\end{cases}$$
Multiplying the first equation by $f$ and comparing the two equations in the system, we have
$$\frac{h_1''}{h_1'}=\frac{f'}{f}.$$ 
So, we have $h_1'=kf$, with $k>0$. Substituting in the second equation, we conclude that $f$ must be solution of the ordinary differential equation
\begin{equation*}
f''-kff'-\rho f=0.
\end{equation*}
\end{example}

\section{Proof of Statements}

\begin{proof}[Proof of Theorem \ref{teoremafinvariante}:]
	
Let $(M,g)=(\mathbb{R}^n\times_fF^m,g_0\oplus f^2g_F)$ be a warped product manifold. Take $(p,q)\in\mathbb{R}^n\times_fF$ and suppose that $\{X_1,\ldots,X_n\}$ is an orthonormal base of $T_p(\mathbb{R}^n)$, i.e.,
$$g_0(X_i,X_j)=\delta_{ij},$$
and $\{Y_1,\ldots,Y_n\}$ is any base of $T_q(F)$.

As the base of the warped product is the Euclidean space $(\mathbb{R}^n,g_0)$, if $Hess_{g_0}f(X_i,X_j)=0$ for every pair of fields in the base,
$$f=\left(\sum_{i=1}^{n}a_ix_i+b_i\right),$$
whence the result follows.

Now, if there are at least a pair of fields $(X_i,X_j)$ such that $Hess_{g_0}f(X_i,X_j)\neq 0$, if $(M,g)$ is a gradient Ricci soliton, it follows from Theorem $1$ of \cite{PaulaRomildo} that $h$ depends only on the base and the fiber is an Einstein manifold.

It is known that in a warped product (see \cite{Oneill})
\begin{equation}
\begin{cases}\label{expressoesric}
\displaystyle  Ric_g(X_i,X_j)=Ric_{g_0}(X_i,X_j)-\frac{m}{f}Hess_{g_0}f(X_i,X_j)\\
Ric_g(X_i,Y_j)=0\\
\displaystyle  Ric_g(Y_i,Y_j)=Ric_{g_F}(Y_i,Y_j)-\left(\frac{\Delta_{g_0}f}{f}+\frac{||grad_{g_0}f||_{g_0}}{f^2}\right)g(Y_i,Y_j)
\end{cases}.
\end{equation}

Then, in coordinates the equation \eqref{eqGRS} of gradient Ricci solitons provides
\begin{equation} \label{eq1}
\begin{cases}
fh,_{x_ix_j}-mf,_{x_ix_j}=0, & i\neq j\\
fh,_{x_ix_i}-mf,_{x_ix_i}=\rho f & i=j
\end{cases}.
\end{equation}

Deriving the first equation from \eqref{eq1} with respect to $x_i$ and the second equation from \eqref{eq1} with respect to $x_j$, and comparing the results we get
\begin{equation} \label{eq4}
f,_{x_j}h,_{x_ix_i}-f,_{x_i}h,_{x_ix_j}=\rho f,_{x_j}.
\end{equation}

On the other hand, isolating $h,_{x_ix_i}$ and $h,_{x_ix_j}$ in the first and second equations of \eqref{eq1} respectively, and replacing \eqref{eq4}, we obtain
\begin{equation} \label{eq5}
\frac{m}{f}\left(f,_{x_j}f,_{x_ix_i}-f,_{x_i}f,_{x_ix_j}\right)=0.
\end{equation}

Without loss of generality, suppose that $f,_{x_i} \neq 0$ for all $i=1,\ldots,n$. Thus, it follows that \eqref{eq5} is equivalent to

$$\frac{f,_{x_ix_i}}{f,_{x_i}}=\frac{f,_{x_ix_j}}{f,_{x_j}}.$$

Then,

$$\log f,_{x_i}=\log f,_{x_j} + F_i(\hat{x_i}),$$

where $\hat{x_i}$ denotes that $F_i$ does not depend on the variable $x_i$. So

$$f,_{x_i}=e^{F_i(\hat{x_i})}f,_{x_j}.$$

Similarly,

$$f,_{x_j}=e^{F_j(\hat{x_j})}f,_{x_i}.$$

Substituting in the previous equation, we get

$$f,_{x_i}=e^{F_i(\hat{x_i})+F_j(\hat{x_j})}f,_{x_i}.$$

Therefore,

\begin{equation}\label{eqFiFj}
F_i(\hat{x_i})=-F_j(\hat{x_j}).
\end{equation}

Fixing $i$ and varying $j$, we will get that $F_i(\hat{x_i})$ is equal to a function that does not depend on $x_j$, for all $j\neq i$. Since $F_i$ is also not dependent on $x_i$, we conclude that $F_i$ is constant. Thus,

$$f,_{x_i}=c_{ij}f,_{x_j},$$

for all $i\neq j$. The characteristic of the equation above shows us that
$$f=P\left(\sum_{i=1}^{n}a_ix_i+b_i\right),$$
where $a_i,b_i\in\mathbb{R}$, $a_i=c_{ij}a_j$, and $P$ is a function at least $C^1$. Therefore, $f$ is invariant by translation.
\end{proof}

\vspace{.2in}

\begin{proof}[Proof of Theorem \ref{teoremaEDP}:]
Let $M = \mathbb{R}^{n}\times_{f} F^{m}$ be a warped product that is a gradient Ricci soliton with a potential function $h$. Suppose that $\{X_{1}, X_{2}, \ldots, X_{n}\}$ is an orthonormal base of $T_{p}(\mathbb{R}^{n})$ and $\{Y_{1}, Y_{2}, \ldots, Y_{m}\}$ is any base of $T_{q}(F)$. Then $g_{0}(X_{i}, X_{j}) = \delta_{ij}$ and denoting  the Ricci tensor of manifolds $M, \mathbb{R}^{n}$, and $F$ by $Ric_g, Ric_{g_0}, Ric_{g_F}$, respectively, we have that the expressions for the Ricci tensor are given by \eqref{expressoesric}.

As the base of the warped product is the Euclidean space $(\mathbb{R}^n,g_0)$ and the fiber is an Einstein manifold, it follows that
\begin{eqnarray}\label{ric2}
\left\{                                                                                             \begin{array}{lcl}
Ric_{g_0}(X_{i}, X_{j}) &=& 0\\
Hess_{g_0}f(X_{i}, X_{j}) &=& f_{,x_{i}x_{j}}\\
\Delta_{g_0}f &=&\displaystyle\sum_{k=1}^{n}f_{,x_{k}x_{k}}\\
\|grad_{g_0}f\|_{0} &=& \displaystyle\sum_{k=1}^{n}f_{,x_k}^{2}\\
Ric_{g_F}(Y_{i}, Y_{j}) &=& \lambda_{F} g_{F}(Y_{i}, Y_{j})\\
Hess_{g}(h)(X_{i}, X_{j}) &=& h_{,x_{i}x_{j}}\\
Hess_g(h)(X_{i}, Y_{j}) &=& 0\\
Hess_g(h)(Y_{i}, Y_{j}) &=& \sum_{k=1}^{n}ff,_{x_{k}}h,_{x_{k}}g_{F}(Y_{i},Y_{j})

\end{array}
\right.
\end{eqnarray} 	

Furthermore, as we are assuming that $(M,g)$ is a gradient Ricci soliton with potential function $h: \mathbb{R}^n\longrightarrow\mathbb{R}$, we have
\begin{equation}\label{GRS}
Ric_g + Hess_g(h) = \rho g.
\end{equation}
Replacing \eqref{ric2} in \eqref{GRS}, we get
\begin{equation*}
\begin{cases}
fh,_{x_ix_j}-mf,_{x_ix_j}=0 , \ \forall i\neq j\\
fh,_{x_ix_i}-mf,_{x_ix_i}=\rho f, \ \forall i\\
\displaystyle \sum_{k=1}^{n}\left[-ff,_{x_kx_k}-(m-1)f,_{x_k}^2 + ff,_{x_k}h,_{x_k}\right]=\rho f^2-\lambda_F
\end{cases}.
\end{equation*}
\end{proof}

\vspace{.2in}

\begin{proof}[Proof of Theorem \ref{teoremasistemadeEDO}:]
Suppose that $f=f(x_1)$. Then, from the first equation of \eqref{sistemadeEDP} we get that
$$h,_{x_ix_j}=0;$$
Thus, it is evident that, $h$ is a function of separable variables, such as
$$h(x_1,\ldots,x_n)=\sum_{k=1}^{n}h_k(x_k).$$

In the second equation of \eqref{sistemadeEDP}, if $i=1$ we get the second equation of \eqref{sistemadeEDO}; if $i\neq 1$ we have
$$h,_{x_ix_i}=\rho,$$
whence we conclude that
$$h_i(x_i)=\frac{\rho}{2}\left(x_i^2+a_ix_i+b_i\right),$$

for all $i\neq 1$. Finally, the third equation of \eqref{sistemadeEDP} follows from the first equation of \eqref{sistemadeEDO}.
\end{proof}

\bibliographystyle{acm}

\bibliography{bibliography}

\end{document}